\newtheorem{theorem}{Theorem}[section]
\newtheorem{lemma}[theorem]{Lemma}
\newcommand{\N}{\mathbb{N}}
\newcommand{\T}{\mathbb{T}}
\newcommand{\vp}{\varphi}
\newcommand{\uu}[1]{\underline{#1}}
\newcommand{\wt}[1]{\widetilde{#1}}
\newcommand{\ar}{\mathrm{ar}}
\newcommand{\tp}{\mathrm{tp}}
\newcommand{\Sh}{\mathrm{Sh}}
\title{On extracting variable Herbrand disjunctions}
\author{Andrei Sipo\c s${}^{a,b}$\\[2mm]
\footnotesize ${}^a$Research Center for Logic, Optimization and Security (LOS), Department of Computer Science,\\
\footnotesize Faculty of Mathematics and Computer Science, University of Bucharest,\\
\footnotesize Academiei 14, 010014 Bucharest, Romania\\[1mm]
\footnotesize ${}^b$Simion Stoilow Institute of Mathematics of the Romanian Academy,\\
\footnotesize Calea Grivi\c tei 21, 010702 Bucharest, Romania\\[2mm]
\footnotesize E-mail: andrei.sipos@fmi.unibuc.ro\\
}
\date{}
\begin{document}

\maketitle

\begin{abstract}
Some quantitative results obtained by proof mining take the form of Herbrand disjunctions that may depend on additional parameters. We attempt to elucidate this fact through an extension to first-order arithmetic of the proof of Herbrand's theorem due to Gerhardy and Kohlenbach which uses the functional interpretation.

\noindent {\em Mathematics Subject Classification 2020}: 03F10, 03F05, 03F30.

\noindent {\em Keywords:} Herbrand's theorem, functional interpretation, first-order arithmetic, proof mining, metastability.
\end{abstract}

\section{Introduction}

The classical Herbrand theorem states, more or less, that for every first-order formula $\vp$ over a signature containing at least one constant symbol, such that $\vp$ has at most one free variable denoted by $x$ and such that classical first-order logic proves that $\exists x \vp$, there is a finite number of closed terms $t_1,\ldots,t_n$ over that signature -- extractable in some way from the proof -- such that it is a theorem of classical first-order logic that
$$\bigvee_{i=1}^n \left(\vp[x:=t_i]\right),$$
this kind of expression being usually called a {\it Herbrand disjunction}.

Such `term extraction' results have later been obtained for systems which include non-logical axioms, namely systems which may serve as a foundation of mathematics, like first-order arithmetic. We mention among such results Kreisel's no-counterexample interpretation \cite{Kre51,Kre52} and the various flavours of G\"odel's {\it Dialectica} \cite{God58}. The latter one, in a highly sophisticated form, plays nowadays a central role in the research program of {\it proof mining}, given maturity by the school of Kohlenbach (see e.g. his book \cite{Koh08} and his recent survey \cite{Koh19}), but originally initiated by Kreisel himself (under the name of {\it unwinding of proofs}), a program which aims to apply term extraction theorems to ordinary mathematical proofs in order to uncover information that may be not immediately apparent. One striking feature of this sort of theorems is that they generally do not extract terms expressible in the original system under discussion, but usually go beyond it in that they make use of concepts like higher-type functionals or recursion along large countable ordinals.

Despite this fact, it has been observed that in certain basic endeavours of proof mining, the extracted terms may take the form of a Herbrand disjunction ``of variable length'' \cite[p. 34]{Koh08g}. For example, Tao, in his work on multiple ergodic averages \cite{Tao08A}, rediscovered (the same example had been previously treated by Kreisel \cite[pp. 49--50]{Kre52}) the Herbrand normal form of the Cauchy property for bounded monotone sequences, a property which was then dubbed `metastability' \cite{Tao08} at the suggestion of Jennifer Chayes. The property is expressed as follows (denoting, for all $f: \N \to \N$ and for all $n \in \N$, by $\wt{f}(n)$ the quantity $n+f(n)$ and by $f^{(n)}$ the $n$-fold composition of $f$ with itself): if we take $k \in \N$, $g: \N \to \N$, and $(a_n)$ a, say, nonincreasing sequence in the interval $[0,1]$, then there is an $N$ such that for all $i$, $j \in [N,\wt{g}(N)]$, $|a_i-a_j| \leq 1/(k+1)$, and, moreover, $N$ can be taken to be an element of the finite sequence $0$, $\wt{g}(0),\ldots,\wt{g}^{(k)}(0)$. Thus, the conclusion of the statement before can be written as
$$\bigvee_{i=0}^{k} \left(\left(\forall i,\,j \in [x,\wt{g}(x)]\,|a_i-a_j| \leq \frac1{k+1}\right)\left[x:=\wt{g}^{\left(i \right)}(0)\right]\right),$$
so we have here a Herbrand disjunction which is {\bf variable} in $k$, but does not depend on the $g$, which only shows up in the terms themselves (a fact to which we shall later return).

Our goal here will be to attempt a logical explanation of this empirical fact.

Towards that end, we shall start with the proof\footnote{We mention that other higher-order treatments of Herbrand's theorem have recently been given by Ferreira and Ferreira \cite{FerFer17} -- using a `star-combinatory' calculus which has later seen arithmetical use in \cite{Fer20a,Fer20b} -- and by Afshari, Hetzl and Leigh \cite{AfsHetLei20} -- using higher-order recursion schemes.} of Herbrand's theorem due to Gerhardy and Kohlenbach \cite{GerKoh05} which uses the {\it Dialectica} interpretation, in particular a variant inspired by that of Shoenfield \cite{Sho67}, to construct witnesses that realize the interpreted formulas in a system similar to G\"odel's $\mathsf{T}$, but lacking recursors, and having case distinction functionals added in order to realize contraction. For a formula of the form $\exists x \vp$ as before, the extracted term is then $\beta$-reduced and it is shown that the resulting term has a sufficiently well-behaved form that one can read off it the classical Herbrand terms. What we do is to extend this proof to theories which are on the level of first-order arithmetic, dealing with the corresponding recursors (which one generally uses to interpret induction; see also \cite{Par72}) by using Tait's infinite terms \cite{Tai65}. This passage to the infinite allows us to prove in this extended context the corresponding version of the well-behavedness property mentioned before. Finally, we illustrate our result via the above metastability statement.

\section{Main results}

We shall denote by $\sigma_\ar$ the signature of arithmetic, containing the symbols $0$, $S$, $<$ having their usual natures and arities, together with symbols for all primitive recursive functions (in particular, the $+$ and $\cdot$ symbols; we shall need, though, to keep $<$ as a {\bf relation} symbol), and by $\mathcal{N}$ the standard $\sigma_\ar$-structure with universe $\N$ and with the symbols having their natural interpretation. We shall fix a signature $\sigma$ which contains $\sigma_\ar$.

We shall work in a formalization of first-order logic where the basic operators are $\neg$, $\lor$, and $\forall$, and we define, for any formulae $\vp$ and $\psi$, $\vp \to \psi:= \neg \vp \lor \psi$, $\vp \land \psi:= \neg(\neg\vp \lor \neg\psi)$, and $\exists x \vp:= \neg\forall x \neg\vp$. In addition,  if $t$ and $u$ are terms, by $t \leq u$ we shall mean $t<u \lor t = u$, and if $x$ is a variable, $t$ is a term and $\vp$ is a formula, we define the bounded quantification $\forall x\leq t \,\vp$ as $\forall x (x \leq t \to \vp)$ (i.e. $\forall x (\neg x \leq t \lor \vp)$). We do not define $\exists x \leq t \,\vp$ as $\exists x (x \leq t \land \vp)$ (i.e. $\neg\forall x \neg \neg (\neg x \leq t \lor \neg \vp)$) but rather as $\neg \forall x \leq t \neg \vp$ (i.e. $\neg \forall x (\neg x \leq t \lor \neg \vp)$), in order to more easily identify the bounded nature of the quantification. We shall call a formula {\it quantifier-free} if all its quantifiers are bounded, and {\it universal} ({\it existential}) if it results from a quantifier-free formula by prefixing it with a number of universal (respectively existential) quantifiers. We denote the set of free variables of a formula $\vp$ by $FV(\vp)$, and we shall maintain this notation for each kind of term or formula introduced below.

For the logical axioms and rules, we shall use, as in \cite{GerKoh05} (see also \cite{StrKoh08}) the system that Shoenfield used (as it can be seen on \cite[p. 215]{Sho67}, it is a modification of the Hilbert deduction system introduced earlier in that book for which see \cite[p. 21]{Sho67}) to state and prove the soundness theorems for his functional interpretation variant. The axioms are of the following forms:
\begin{itemize}
\item $\neg \vp \lor \vp$;
\item $\forall x \vp \to (\vp[x:=t])$ (where $x$ is a variable which is free for the term $t$ in $\vp$);
\item equality axioms,
\end{itemize}
while the rules have the following forms:
\begin{itemize}
\item $\vp \vdash \vp \lor \psi$;
\item $\vp \lor \vp \vdash \vp$ (the `contraction rule');
\item $(\vp \lor \psi) \lor \chi \vdash \vp \lor (\psi \lor \chi)$;
\item $\vp \lor \psi,\,\neg \vp \lor \chi \vdash \psi \lor \chi$;
\item $\vp \lor \psi \vdash (\forall x \vp) \lor \psi$ (where $x$ is a variable which is not free in $\psi$).
\end{itemize}

We shall work with the formalization of first-order arithmetic $\mathsf{PA^\sigma}$ (the $\sigma$ superscript indicating that the induction axiom schema below will work for any $\sigma$-formula) consisting of the following universal arithmetical sentences:
\begin{itemize}
\item $\forall x \neg (Sx=0)$;
\item $\forall x \forall y (Sx=Sy\to x=y)$;
\item $\forall x \neg (x<0)$;
\item $\forall x \forall y (x<Sy \leftrightarrow x \leq y)$,
\item defining axioms for the primitive recursive function symbols,
\end{itemize}
together with the induction axiom schema, which contains, for every $\sigma$-formula $\vp$, the formula
$$((\vp[x:=0]) \land \forall x(\vp \to (\vp[x:=Sx]))) \to \forall x \vp.$$
If we require, in the above, $\vp$ to be an existential formula, what we obtain is the schema of $\Sigma_1$-induction and the resulting theory is denoted by $\mathsf{I\Sigma_1^\sigma}$. With a moderate amount of effort, it can be shown that the following sentences are consequences of $\mathsf{PA^\sigma}$, and even of $\mathsf{I\Sigma_1^\sigma}$:
\begin{itemize}
\item $\forall x \forall y (x \leq y \lor y < x)$ (for this, see \cite[pp. 204--205]{Sho67});
\item $\forall x (\neg (x =  0) \to \exists y (Sy=x))$;
\item $\forall x\forall y (x \leq y \leftrightarrow \exists z(x+z=y))$.
\end{itemize}

We shall also fix a set $\Gamma$ of universal $\sigma$-sentences.

We shall now define the needed system of infinite terms in finite types. By a {\it finite type} we mean an element of the free algebra $\T$ with a unique binary operation $\to$ generated by one element denoted by $0$.  For example, $0$ and $(0 \to 0) \to 0$ are types. We call a finite sequence of types $\vec{\rho} = (\rho_1,\ldots,\rho_n) \in \T^*$ a {\it type-tuple}.

We extend the operation $\to$ to type-tuples, progressively, first to an operation $\to : \T^* \times \T \to \T^*$, recursively, by
$$() \to \rho := \rho, \quad (\vec{\theta}, \tau) \to \rho:= \vec{\theta} \to (\tau \to \rho),$$
then to an operation $\to : \T^* \times \T^* \to \T^*$, again recursively, by
$$\vec{\rho} \to () := (), \quad \vec{\rho} \to (\vec{\theta},\tau) := (\vec{\rho} \to \vec{\theta}, \vec{\rho} \to \tau).$$
For each $n \in \N$, we denote by $0^n$ the type-tuple $(0,\ldots,0)$ where $0$ is repeated $n$ times -- we remark that it is not difficult to show that $0^0 \to 0 = 0$ and that, for each $n$, $0^{n+1} \to 0 = 0 \to (0^n \to 0)$.

We shall now define $\omega$-terms recursively (technically, as below for $\omega$-formulas, it should be $(\omega,\sigma)$-terms, but we shall drop the fixed $\sigma$ as to not clutter the text). The terms are, as per Tait \cite{Tai65}, an extension of the simply-typed $\lambda$-calculus, and since $\lambda$-terms are an integral part of the calculus, more care has to be taken when defining substitution. The recursion goes as follows:
 
\begin{itemize}
\item we have a countable set of variables of each type $\rho$ denoted by $V_\rho$, and we put $V_0$ to be exactly the set of first-order variables;
\item for each function symbol $f$ of $\sigma$ of arity $n$ {\bf which is not a primitive recursive function symbol}, we have a corresponding constant term of type $0^n \to 0$ also denoted by $f$;
\item for each quantifier-free $\sigma$-formula $\vp$ {\bf which contains symbols not in $\sigma_\ar$} with $m$ free variables, we have a constant term $c_\vp$ of type $0^{m+2} \to 0$ (which shall serve as a case distinction functional);
\item if $x$ is a variable of type $\rho$ and $t$ is a term of type $\tau$, then $\lambda x.t$ is a term of type $\rho\to\tau$;
\item if $t$ is a term of type $\rho\to\tau$ and $s$ is a term of type $\rho$, then $ts$ is a term of type $\tau$;
\item if for each $n \in \N$, $t_n$ is a term of type $\rho$, then $(t_n)_{n \in \N}$ is a term of type $0 \to \rho$; if $\rho$ is $0$, we say that $(t_n)_{n \in \N}$ is {\it zero}, otherwise we say it is {\it non-zero}.
\end{itemize}

One can define for each $\omega$-term $t$ its length $|t|$ as an ordinal, in the following way:
\begin{itemize}
\item the length of a variable or constant is $1$;
\item the length of $\lambda x.t$ is $|t|+1$;
\item the length of $ts$ is $\max(|t|,|s|)+1$;
\item the length of $(t_n)_{n \in \N}$ is $\sup_{n \in \N} \left(\left|t_n\right|+1\right)$.
\end{itemize}

Having defined this length function, one can easily see that: (i) the $\omega$-terms form a set since one moment's reflection shows that already the set of those $\omega$-terms of length smaller than $\omega_1$ is closed under the constructors; (ii) one can prove properties of $\omega$-terms by structural induction.

If we have a type-tuple $\vec{\rho} = (\rho_1,\ldots,\rho_n) \in \T^*$, we say that a {\it term-tuple} of type-tuple $\vec{\rho}$ is a tuple of terms $\uu{t}=(t_1,...,t_n)$ such that for each $i$, $t_i$ is a term of type $\rho_i$. We denote this situation by $\tp(\uu{t}) = \vec{\rho}$. The application of terms can be extended recursively to term-tuples -- $t()$ will be $t$; $t(\uu{t'},u)$ will be $(t\uu{t'})u$; $()\uu{v}$ will be $()$; and, finally, $(\uu{t},u)\uu{v}$ will be $(\uu{t}\uu{v},u\uu{v})$. In addition, one shall build $\omega$-formulas from $\omega$-terms using the same constructors that have been used for $\sigma$-formulas in classical first-order logic -- note that the only equality relation which we shall admit is the one of type $0$.

We now introduce a number of abbreviations. First, we illustrate where recursors fit into this story. For each natural number $n$, we denote by $\uu{n}$ the term $S\ldots S0$ where $S$ is iterated $n$ times. If $\rho$ is a type, $a$ is a term of type $\rho$ and $b$ is a term of type $0\to(\rho\to\rho)$, we denote by $R_{a,b}$ the term $(t_n)_{n \in \N}$ of type $0\to\rho$ such that $t_0 = a$ and for every $n \in \N$, $t_{n+1} = b \uu{n} t_n$. We will also define {\it simultaneous recursors} (as in \cite[p. 48]{Koh08}) in the following way: if $\vec{\rho} = (\rho_1,\ldots,\rho_n)$ is a type-tuple, $\uu{a}$ is a term-tuple of type-tuple $\vec{\rho}$ and $\uu{b}$ is a term-tuple of type-tuple $0 \to (\vec{\rho} \to \vec{\rho})$, we denote by $R_{\uu{a},\uu{b}}$ the term-tuple of type-tuple $0 \to \vec{\rho}$ such that for each $i$, $(R_{\uu{a},\uu{b}})_i$ is the term $(t^i_n)_{n \in \N}$ of type $0\to\rho_i$ such that for every $i$, $t^i_0=a_i$ and for every $i$ and $n$, $t^i_{n+1} = \uu{b} \uu{n} t_n$.

Now that we have these recursors, we interrupt the defining of abbreviations in order to introduce some embedding functions denoted by $\iota$ -- one that takes $\sigma$-terms into $\omega$-terms by setting, for every variable $x$, $\iota(x):=x$ and, recursively, $\iota(f(t_1,\ldots,t_n)):=f(\iota(t_1))\ldots(\iota(t_n))$ if $f$ is not a primitive recursive function symbol, whereas if $f$ is a primitive recursive symbol, one defines the corresponding $\omega$-terms in the classical way using recursors and $\lambda$-abstractions; and another embedding function that takes $\sigma$-formulas into $\omega$-formulas in the natural way.

Resuming now the defining of abbreviations, using the primitive recursive decidability of quantifier-free formulas, for each quantifier-free $\sigma$-formula $\vp$ containing only symbols in $\sigma_\ar$ with $m$ free variables, we define a derived case distinction term $c_\vp$ of type $0^{m+2} \to 0$ in the established manner. Generally, {\bf we shall prefer} using these case distinction terms instead of the case distinction constants which we have introduced before for formulas involving symbols not in $\sigma_\ar$ essentially by replacing those terms in the formula at hand which involve those kind of symbols with variables, with the only non-arithmetic symbols which we will not be able to get rid of being the relation symbols.

If $\vp$ is a quantifier-free $\sigma$-formula with $m$ free variables $x_1,\ldots,x_m$, in order of their appearance in $\vp$, we define the term $a^i_\vp$ (which shall serve, by its definition using bounded search, as an argmax functional) as the term (where, here, and in the sequel, the hat will mean the omission of some element in a tuple)
$$\lambda x_1.\ldots\widehat{\lambda x_i}.\ldots.\lambda x_m. R_{0,\lambda v.\lambda w.(c_\vp x_1\ldots v\ldots x_mvw)},$$
where we note that $c_\vp$ can be a constant or not depending on the nature of $\vp$.

If $B$ is a quantifier-free $\omega$-formula with $q$ free variables $z_1,\ldots,z_q$, in order of their appearance in $B$, such that there is a $\sigma$-formula $\vp$ with $m$ free variables $x_1,\ldots,x_m$, in order of their appearance in $\vp$, and there are terms $t_1,\ldots,t_m$ with their free variables among $z_1,\ldots,z_q$ such that $B=\iota(\vp)[x_1:=t_1]\ldots[x_m:=t_m]$, we define $c_B$ as the term
$$\lambda z_1.\ldots.\lambda z_q.(c_\vp t_1\ldots t_m),$$
which, of course, depends on the decomposition chosen for $B$, but we can just choose in advance such a decomposition for each suitable formula $B$.
This allows us to define, if $i\in\{1,\ldots,q\}$ is such that $z_i$ is of type $0$, the term $a^i_B$ as the term
$$\lambda z_1.\ldots\widehat{\lambda z_i}.\ldots.\lambda z_q. R_{0,\lambda v.\lambda w.(c_B z_1\ldots v\ldots z_qvw)}.$$

We now proceed to define the semantics of this calculus. We put $\N_0:=\N$ and for any types $\rho$, $\theta$, we set $\N_{\rho \to \tau} := \N_\tau^{\N_\rho}$. Suppose we have a family of functions $e = (e_\rho)_{\rho \in \T}$ where for each $\rho$, $e_\rho :V_\rho \to \N_\rho$. Let $\mathcal{M}$ be a $\sigma$-structure whose $\sigma_\ar$-reduct is $\mathcal{N}$. We define the semantics of $\omega$-terms, relative to $\mathcal{M}$ and $e$, recursively:
\begin{itemize}
\item if $x$ is a variable of type $\rho$, then $x^{\mathcal{M},\omega}_e:=e_\rho(x)$;
\item for every $\omega$-term $f$ arising from a function symbol of $\sigma$ of arity $n$ also denoted by $f$, we set $f^{\mathcal{M},\omega}_e \in \N_{0^n \to 0}$ to be such that for every $a_1,\ldots,a_n \in \N$, we have that
$$f^{\mathcal{M},\omega}_e(a_1)\ldots(a_n) = f^\mathcal{M}(a_1,\ldots,a_n);$$
\item for every quantifier-free $\sigma$-formula $\vp$ which contains symbols not in $\sigma_\ar$ with $m$ free variables $x_1,\ldots,x_m$, in order of their appearance in $\vp$, we set $\left(c_\vp\right)^{\mathcal{M},\omega}_e \in \N_{0^{m+2} \to 0}$ to be such that for every $a_1,\ldots,a_m$, $b_1$, $b_2 \in \N$, we have that
$$\left(c_\vp\right)^{\mathcal{M},\omega}_e(a_1)\ldots(a_m)(b_1)(b_2)=
  \begin{cases} 
      \hfill b_1, \hfill &  \text{if } \|\vp\|^\mathcal{M}_{\left(e_0\right)_{x_1\leftarrow a_1,\ldots,x_m\leftarrow a_m}} = 1,\\
      \hfill b_2, \hfill & \text{otherwise}; \\
  \end{cases}$$
\item if $x$ is a variable of type $\rho$ and $t$ is a term of type $\tau$, then we set $(\lambda x.t)^{\mathcal{M},\omega}_e \in \N_{\rho\to\tau}$ to be such that for every $a \in \N_\rho$,
$$(\lambda x.t)^{\mathcal{M},\omega}_e(a) = t^{\mathcal{M},\omega}_{e_{x\leftarrow a}};$$
\item if $t$ is a term of type $\rho\to\tau$ and $s$ is a term of type $\rho$, then we set
$$(ts)^{\mathcal{M},\omega}_e := t^{\mathcal{M},\omega}_e\left(s^{\mathcal{M},\omega}_e\right) \in \N_\tau;$$
\item if for each $n \in \N$, $t_n$ is a term of type $\rho$, then  we set $\left((t_n)_{n \in \N}\right)^{\mathcal{M},\omega}_e \in \N_{0 \to \rho}= \N_\rho^\N$ to be such that for every $m \in \N$,
$$\left((t_n)_{n \in \N}\right)^{\mathcal{M},\omega}_e(m) = \left(t_m\right)^{\mathcal{M},\omega}_e.$$
\end{itemize}

The semantics of formulas is denoted by $\|\cdot\|^{\mathcal{M},\omega}_e$, or by $\models^\omega$ if the formula is a sentence, and is defined exactly as in the first-order case. One easily checks that for every $\sigma$-term $t$, $t^\mathcal{M}_{e_0} = (\iota(t))^{\mathcal{M},\omega}_e$, and that for every $\sigma$-formula $\vp$, $FV(\vp)=FV(\iota(\vp))$ and $\|\vp\|^\mathcal{M}_{e_0} = \|\iota(\vp)\|^{\mathcal{M},\omega}_e$, and thus that if $\vp$ is a sentence, then $\mathcal{M} \models \vp$ iff $\mathcal{M} \models^\omega \iota(\vp)$ (the last one by extending first-order valuations by zero to the higher types).

We will now express the fact that the derived terms have the natural intended semantics. We have that (using here that the $\sigma_\ar$-reduct of $\mathcal{M}$ is $\mathcal{N}$):
\begin{itemize}
\item for each $n \in \N$, $(\uu{n})^{\mathcal{M},\omega}_e = n$;
\item for each $a$ of type $\rho$ and $b$ of type $0 \to (\rho \to \rho)$,
$$(R_{a,b})^{\mathcal{M},\omega}_e(0) = a^{\mathcal{M},\omega}_e,$$
and, for each $n \in \N$
$$(R_{a,b})^{\mathcal{M},\omega}_e(n+1) = b^{\mathcal{M},\omega}_e(n)((R_{a,b})^{\mathcal{M},\omega}_e(n));$$
\item for each type-tuple $\vec{\rho} = (\rho_1,\ldots,\rho_n)$, for each $\uu{a}$ of type-tuple $\vec{\rho}$ and $\uu{b}$ of type-tuple $0 \to (\vec{\rho} \to \vec{\rho})$, and for each $i$,
$$((R_{\uu{a},\uu{b}})_i)^{\mathcal{M},\omega}_e(0) = (a_i)^{\mathcal{M},\omega}_e,$$
and, for each $n \in \N$,
$$((R_{\uu{a},\uu{b}})_i)^{\mathcal{M},\omega}_e(n+1) = (b_i)^{\mathcal{M},\omega}_e(n)(((R_{\uu{a},\uu{b}})_1)^{\mathcal{M},\omega}_e(n))\ldots (((R_{\uu{a},\uu{b}})_l)^{\mathcal{M},\omega}_e(n));$$
\item if $\vp$ is a quantifier-free $\sigma$-formula containing only symbols in $\sigma_\ar$ with $m$ free variables $x_1,\ldots,x_m$, in order of their appearance in $\vp$, then, for each $a_1,\ldots,a_m$, $b_1$, $b_2 \in \N$, we have that
$$\left(c_\vp\right)^{\mathcal{M},\omega}_e(a_1)\ldots(a_m)(b_1)(b_2)=
  \begin{cases} 
      \hfill b_1, \hfill &  \text{if } \|\vp\|^\mathcal{M}_{\left(e_0\right)_{x_1\leftarrow a_1,\ldots,x_m\leftarrow a_m}} = 1,\\
      \hfill b_2, \hfill & \text{otherwise}; \\
  \end{cases}$$
\item if $\vp$ is a quantifier-free $\sigma$-formula with $m$ free variables $x_1,\ldots,x_m$, in order of their appearance in $\vp$, then for each $a_1,\ldots,\widehat{a_i},\ldots,a_m\in \N$, if we set, for each $n\in \N$,
$$N(n):= \{ p<n \mid \mathcal{M} \models \vp[x_1:=\uu{a_1}]\ldots\widehat{[x_i:=\uu{a_i}]}\ldots[x_m:=\uu{a_m}][x_i:=\uu{p}] \},$$
then, for each $n$,
$$\left(a^i_\vp\right)^{\mathcal{M},\omega}_e(a_1)\ldots\widehat{(a_i)}\ldots (a_m)(n) = 
\begin{cases} 
      \hfill \max N(n), \hfill &  \text{if } N(n) \neq \emptyset,\\
      \hfill 0, \hfill & \text{otherwise}. \\
  \end{cases}$$
\item if $B$ is a quantifier-free $\omega$-formula with $q$ free variables $z_1,\ldots,z_q$, in order of their appearance in $B$, such that there is a $\sigma$-formula $\vp$ with $m$ free variables $x_1,\ldots,x_m$, in order of their appearance in $\vp$, and there are terms $t_1,\ldots,t_m$ with their free variables among $z_1,\ldots,z_q$ such that $B=\iota(\vp)[x_1:=t_1]\ldots[x_m:=t_m]$, then for any suitable $a_1,\ldots,a_q$, and any $b_1$, $b_2 \in \N$, we have that
$$\left(c_B\right)^{\mathcal{M},\omega}_e(a_1)\ldots(a_q)(b_1)(b_2)=
 \begin{cases} 
      \hfill b_1, \hfill &  \text{if } \|B\|^\mathcal{M,\omega}_{e_{z_1\leftarrow a_1,\ldots,z_q\leftarrow a_q}} = 1,\\
      \hfill b_2, \hfill & \text{otherwise}; \\
  \end{cases}$$
\item in the hypothesis above, if $i\in\{1,\ldots,q\}$ is such that $z_i$ is of type $0$, then for any suitable $a_1,\ldots,\widehat{a_i},\ldots,a_m$, if we set, for each $n\in \N$,
$$N(n):= \{ p<n \mid \mathcal{M} \models B[z_1:=\uu{a_1}]\ldots\widehat{[z_i:=\uu{a_i}]}\ldots[z_q:=\uu{a_q}][z_i:=\uu{p}] \},$$
then, for each $n$,
$$\left(a^i_B\right)^{\mathcal{M},\omega}_e(a_1)\ldots\widehat{(a_i)}\ldots (a_q)(n) = 
\begin{cases} 
      \hfill \max N(n), \hfill &  \text{if } N(n) \neq \emptyset,\\
      \hfill 0, \hfill & \text{otherwise}. \\
  \end{cases}$$
\end{itemize}

The main tool we shall use is, as announced in the Introduction, Shoenfield's variant of G\"odel's {\it Dialectica} interpretation \cite[pp. 214--222]{Sho67}. This associates to every first-order $\sigma$-formula $\vp$ a formula $\vp_\Sh$ in the calculus we have defined above, together with two disjoint term-tuples of variables $\uu{u}_\vp$ and $\uu{x}_\vp$, recursively, in the following way (with appropriate renamings of variables at crucial points):
\begin{itemize}
\item if $\vp$ is atomic, then $\vp_\Sh=\iota(\vp)$ and $\uu{u}_\vp=\uu{x}_\vp=()$;
\item if $\vp$ is $\neg\psi$, then, taking $\uu{f}$ to be a term-tuple of fresh variables of type-tuple $\tp(\uu{u}_\psi) \to \tp(\uu{x}_\psi)$, $\uu{u}_\vp = \uu{f}$, $\uu{x}_\vp = \uu{u}_\psi$ and $\vp_\Sh = \neg \psi_\Sh [\uu{x}_\psi := \uu{f} \uu{u}_\psi]$;
\item if $\vp$ is $\psi \lor \chi$, then $\vp_\Sh = \psi_\Sh \lor \chi_\Sh$, $\uu{u}_\vp$ is $\uu{u}_\psi$ concatenated with $\uu{u}_\chi$ and $\uu{x}_\vp$ is $\uu{x}_\psi$ concatenated with $\uu{x}_\chi$;
\item if $\vp$ is $\forall z \psi$, where the quantification is not bounded, then $\vp_\Sh=\psi_\Sh$, $\uu{u}_\vp$ is $(z)$ concatenated with $\uu{u}_\psi$ and $\uu{x}_\vp=\uu{x}_\psi$;
\item  if $\vp$ is $\forall z \leq t\, \psi$, then $\vp_\Sh=\forall z \leq t\,\psi_\Sh$, $\uu{u}_\vp=\uu{u}_\psi$ and $\uu{x}_\vp=\uu{x}_\psi$.
\end{itemize}

One remarks that if $A$ is a quantifier-free formula and $\uu{z}$ is a term-tuple of variables, then:
\begin{itemize}
\item $A_\Sh=\iota(A)$, $\uu{u}_A=\uu{x}_A=()$;
\item $\left(\forall \uu{z} A\right)_\Sh=\iota(A)$, $\uu{u}_{\forall \uu{z} A}=\uu{z}$, $\uu{x}_{\forall \uu{z} A}=()$ (if the added quantifications are not bounded);
\item $\left(\exists \uu{z} A\right)_\Sh=\neg\neg\iota(A) \sim \iota(A)$, $\uu{u}_{\exists \uu{z} A}=()$, $\uu{x}_{\exists \uu{z} A}=\uu{z}$ (ditto).
\end{itemize}

The following result justifies the definition of the interpretation.

\begin{theorem}[{Soundness theorem}]\label{sound}
Let $\vp$ be a $\sigma$-formula such that
$$\mathsf{PA^\sigma} + \Gamma \vdash \vp.$$
Then there is a term-tuple $\uu{t}$ with $\tp\left(\uu{t}\right)=\tp\left(\uu{u}_\vp\right) \to \tp\left(\uu{x}_\vp\right)$ and $FV\left(\uu{t}\right)\subseteq FV(\vp)$, extractable from the proof, such that for every $\sigma$-structure $\mathcal{M}$ such that its $\sigma_\ar$-reduct is $\mathcal{N}$ and $\mathcal{M} \models \Gamma$ and for every higher-order valuation $e$, we have that
$$\left\|\forall \uu{u}_\vp \left(\vp_\Sh \left[\uu{x}_\vp := \uu{t}\, \uu{u}_\vp\right]\right) \right\|^{\mathcal{M},\omega}_e = 1.$$
\end{theorem}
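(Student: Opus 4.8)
The plan is to proceed by induction on the length of the given derivation of $\vp$ in the Shoenfield proof system over $\mathsf{PA^\sigma}+\Gamma$, treating each axiom and each inference rule in turn, and in each case exhibiting the realizing term-tuple $\uu t$ together with a verification of the semantic claim uniformly over all admissible $\mathcal{M}$ and valuations $e$. A point worth stressing at the outset is that the conclusion to be established is semantic — the value $\|\cdots\|^{\mathcal{M},\omega}_e = 1$ — rather than the derivability of the realized formula in some verifying calculus of terms. This lets me discharge every verification by appealing directly to the intended semantics of the derived terms recorded just before the statement (in particular the recursion equations for $R_{a,b}$ and $R_{\uu{a},\uu{b}}$, the branching behaviour of $c_\vp$ and $c_B$, and the argmax behaviour of $a^i_\vp$), and, where iteration is involved, to ordinary induction on $\N$ in the metatheory.

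I would first clear the bookkeeping cases. Every non-logical axiom other than the induction schema — the successor and order axioms, the defining axioms for the primitive recursive symbols, and each member of $\Gamma$ — is a universal $\sigma$-sentence, so by the computation of $(\forall \uu{z} A)_\Sh$ its $\uu{x}$-tuple is empty, the term-tuple $\uu t$ is taken to be $()$, and the claim collapses to the plain truth of a true universal sentence in $\mathcal{M}$; this holds because the $\sigma_\ar$-reduct of $\mathcal{M}$ is $\mathcal{N}$ for the arithmetical axioms and because $\mathcal{M}\models\Gamma$ for the members of $\Gamma$. The logical axiom $\neg\vp\lor\vp$ is realized by the canonical diagonal term-tuple: since $\uu{u}_{\neg\vp}$ is a tuple of functionals producing the $\uu{x}_\vp$ and $\uu{x}_{\neg\vp}=\uu{u}_\vp$, one feeds each side the witness demanded by the other, and the instance of the identity matrix is validated. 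The quantifier axiom $\forall x\vp\to(\vp[x:=t])$ is handled by passing the substituted term through the tuples, and the equality axioms are atomic, hence trivial. The $\lor$-introduction, associativity, and $\forall$-rules only reshuffle or extend the variable tuples — the last one absorbing the generalized variable into $\uu{u}_\vp$ — leaving the realizer essentially unchanged after the prescribed renamings.

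The two genuinely substantial rule cases are the cut rule and the contraction rule. For the cut $\vp\lor\psi,\ \neg\vp\lor\chi\vdash\psi\lor\chi$, I would compose the two induction-hypothesis realizers by substitution, routing the witness that one premise produces for $\vp$ into the counterexample functional that the other premise expects for $\neg\vp$; this is exactly where the application of term-tuples to term-tuples is used, and the verification is a direct semantic chase through the two realized matrices. The contraction rule $\vp\lor\vp\vdash\vp$ is the characteristic classical obstruction: feeding both copies the same challenge, the premise realizer supplies two candidate witnesses for $\uu{x}_\vp$, and to collapse them to one I would select, by means of the case distinction term $c_\vp$ (or, when non-arithmetical symbols are present, the constant $c_B$ applied along a fixed decomposition), the candidate belonging to whichever copy makes $\vp_\Sh$ true; the branching semantics of $c_\vp$ recorded above is precisely what guarantees that the selected witness validates the matrix.

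The main obstacle, and the case for which the whole apparatus of infinite terms has been erected, is the induction schema $((\vp[x:=0])\land\forall x(\vp\to(\vp[x:=Sx])))\to\forall x\vp$. Its Shoenfield interpretation asks for a functional that, from a realizer of the conjunction of base case and induction step, returns a witness for $\uu{x}_\vp$ in $\forall x\vp$ uniformly in the value of $x$; such a functional is nothing but an iteration of the step realizer along $x$, which I would supply by the simultaneous recursor $R_{\uu{a},\uu{b}}$ — here literally the infinite term $(t_n)_{n \in \N}$ — with $\uu{a}$ obtained from the base-case realizer and $\uu{b}$ from the step realizer. The verification that this term-tuple realizes the interpreted axiom is then a metatheoretic induction on the value $m\in\N$ assigned to $x$, whose base and step invoke, respectively, the semantic recursor equations for $R_{\uu{a},\uu{b}}$ at $0$ and at $n+1$ recorded before the statement, together with the realized step matrix applied at $n$. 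The delicate part — and the one I expect to consume the most care — is the correct matching of the counterexample functionals arising from the two negations and the universal quantifier in the implication to the inputs of the step realizer, so that the inductive invariant is exactly the realized step matrix at each stage; once the tuples are lined up, passing to the infinite term makes the recursion manifest and the metatheoretic induction goes through.
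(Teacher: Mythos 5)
Your overall architecture --- induction on the derivation, trivial realization of the universal sentences (the equality axioms, $\Gamma$, and the non-induction axioms of $\mathsf{PA^\sigma}$), a diagonal realizer for $\neg\vp\lor\vp$, composition of realizers for the cut rule, and case distinction terms for contraction --- is exactly the skeleton of the paper's proof, which defers all of these cases to Shoenfield and to Gerhardy--Kohlenbach. The genuine gap is in the one case that is actually new at the level of first-order arithmetic, and the one for which the paper's machinery of argmax terms $a^i_B$ was built: the induction axiom schema.

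You assert that the realizer for $\psi := ((\vp[x:=0]) \land \forall x(\vp \to (\vp[x:=Sx]))) \to \forall x \vp$ is ``nothing but an iteration of the step realizer along $x$'', verified by a metatheoretic induction whose stages invoke ``the realized step matrix applied at $n$''. That is the correct treatment of the induction \emph{rule}, but it fails for the \emph{axiom}. Taking, as the paper does, $\vp$ of the form $\exists z P(x,z)$, the Shoenfield interpretation yields $\uu{u}_\psi = (u,f,y)$, $\uu{x}_\psi = (x,z,v)$, and the matrix $P(0,u) \land (P(x,z) \to P(Sx,fxz)) \to P(y,v)$. Here the step implication is a hypothesis available only at the \emph{single} point $(x,z)$ that the realizer itself must output; it is not available at every $n < y$. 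Consequently your metatheoretic induction on the value assigned to $y$ has no step hypothesis to invoke at each stage, and the pure iteration $v := R_{u,f}y$ cannot be verified. What is needed --- and what the paper does, following Parsons --- is to combine the recursor with a bounded counterexample search: setting $B := P(w,R_{u,f}w) \land \neg P(Sw,R_{u,f}(Sw))$, one takes $x := a^1_B ufy$, $z := R_{u,f}x$, $v := R_{u,f}y$. Then either some $w < y$ breaks the orbit of the iteration, in which case $x$ is such a $w$ (so $P(x,z)$ holds while $P(Sx,fxz)$ fails, falsifying the premise of the matrix), or no $w < y$ does, in which case $P(0,u)$ propagates along the orbit and $P(y,v)$ holds. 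Thus the case distinction functionals enter not only through contraction, as in your plan, but inside the induction realizer itself, via $a^1_B$ (i.e., via $c_B$). Alternatively, you could realize the induction rule by your iteration and then derive the axiom schema from the rule, but that derivation itself uses contraction, so case distinction is unavoidable either way; as written, your verification plan for the induction axiom cannot be completed.
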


\begin{proof}
We have to extend the proof given by \cite[pp. 220--222]{Sho67} and \cite[Lemma 8]{GerKoh05} to cover our situation. We remark that the equality axioms, the sentences in $\Gamma$ and the non-induction axioms of $\mathsf{PA^\sigma}$ are all universal and thus trivially interpreted. In addition, it is easily checked that the axiom and rule regarding the universal quantifier still get witnesses given our interpretation of bounded quantification.

We shall now show how the induction axiom schema may be interpreted directly, since \cite{Sho67} only shows how to interpret the induction rule from which the axiom schema may be derived using contraction, and hence we shall need to use case distinction functionals. We follow the ideas of Parsons \cite[Section 4]{Par72}. Consider an instance of the induction axiom
$$\psi := ((\vp[x:=0]) \land \forall x(\vp \to (\vp[x:=Sx]))) \to \forall x \vp.$$
We may consider $\vp$ to be of the form $\exists z P(x,z)$ where $P$ is a relation symbol of $\sigma$ which is not in $\sigma_\ar$. (The complications arising from the general case, where one introduces multiple, possibly alternately, quantified variables, are that one shall need simultaneous recursors, not just of type $0$, and appropriate interleavings of term-tuples.)

We obtain that $\uu{u}_\psi = (u,f,y)$ with $\tp(\uu{u}_\psi) = (0,0\to 0,0)$, $\uu{x}_\psi = (x,z,v)$ with $\tp(\uu{x}_\psi)=(0,0,0)$ and
$$\psi_\Sh \sim P(0,u) \land ((P(x,z) \to P(Sx, fxz))) \to P(y,v).$$

Therefore, one must, roughly speaking, write $x$, $z$, $v$ in terms of $u$, $f$, $y$ such that the above formula holds. Set
$$B:=P(w,R_{u,f}w) \land \neg P(Sw,R_{u,f}(Sw)),$$
which is clearly decomposable. Then one can take
$$x:= a^1_Bufy,\quad z:= R_{u,f}x, \quad v:=R_{u,f}y,$$
and after a quick check we are done.
\end{proof}

The following is the analogue of \cite[Lemma 8]{GerKoh05}.

\begin{lemma}\label{l1}
Let $A$ be a quantifier-free $\sigma$-formula such that it has at most one free variable denoted by $x$ and such that the quantification in $\exists x A$ is not bounded. Assume that
$$\mathsf{PA^\sigma} + \Gamma \vdash \exists x A.$$
Then there is a closed $\omega$-term of type $0$, extractable from the proof, such that for every $\sigma$-structure $\mathcal{M}$ such that its $\sigma_\ar$-reduct is $\mathcal{N}$ and $\mathcal{M} \models \Gamma$, we have that
$$\mathcal{M} \models^\omega \iota(A)[x:=t].$$
\end{lemma}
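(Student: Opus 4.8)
The plan is to apply the Soundness Theorem (Theorem~\ref{sound}) to the sentence $\exists x A$ and to unravel the witness it produces; in effect, this lemma is the specialization of that theorem to a purely existential statement, where the interpretation collapses to a single witnessing term.

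First I would record the Shoenfield interpretation of $\vp := \exists x A$. Since $A$ is quantifier-free, the remark following the definition of the interpretation already yields $\vp_\Sh = \neg\neg\iota(A) \sim \iota(A)$, together with $\uu{u}_\vp = ()$ and $\uu{x}_\vp = (x)$; thus the universally interpreted block is empty and there is a single existentially interpreted variable, namely $x$ itself, of type $0$. Next I would invoke Theorem~\ref{sound}, which yields a term-tuple $\uu{t}$ with $\tp(\uu{t}) = \tp(\uu{u}_\vp) \to \tp(\uu{x}_\vp) = () \to (0)$. Unwinding the rules defining $\to$ on type-tuples gives $() \to (0) = (0)$, so $\uu{t}$ is a single $\omega$-term $t$ of type $0$. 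The theorem also guarantees $FV(\uu{t}) \subseteq FV(\vp)$; since $\vp = \exists x A$ is a sentence (the sole free variable $x$ of $A$ being bound), we have $FV(\vp) = \emptyset$, whence $t$ is closed, as required.

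Finally I would read off the semantic conclusion. For every admissible $\mathcal{M}$ and every higher-order valuation $e$, the theorem asserts
$$\left\| \forall \uu{u}_\vp \left( \vp_\Sh \left[ \uu{x}_\vp := \uu{t}\,\uu{u}_\vp \right] \right) \right\|^{\mathcal{M},\omega}_e = 1.$$
As $\uu{u}_\vp = ()$, the universal prefix is vacuous and $\uu{t}\,\uu{u}_\vp = \uu{t}() = (t)$, so this reduces to $\left\| \neg\neg\iota(A)[x:=t] \right\|^{\mathcal{M},\omega}_e = 1$; since the semantics is classical, the double negation may be dropped, giving $\left\| \iota(A)[x:=t] \right\|^{\mathcal{M},\omega}_e = 1$. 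Because $t$ is closed, $\iota(A)[x:=t]$ is a sentence, and this is exactly the asserted $\mathcal{M} \models^\omega \iota(A)[x:=t]$.

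I do not expect a genuine obstacle here, precisely because the hard analytic work is confined to the Soundness Theorem. The only points requiring care are the bookkeeping of the empty universal block (so that $\uu{t}$ needs no arguments supplied), the type-tuple arithmetic confirming that $\uu{t}$ is a single type-$0$ term rather than a longer tuple, and the appeal to the free-variable clause of Theorem~\ref{sound} that certifies $t$ to be closed.
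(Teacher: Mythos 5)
Your proposal is correct and follows essentially the same route as the paper's own proof: compute the Shoenfield interpretation of $\exists x A$ (empty universal block, single type-$0$ existential variable), apply Theorem~\ref{sound}, use the free-variable clause to conclude $t$ is closed, and read off the semantic conclusion. The only cosmetic difference is that you spell out dropping the double negation in $\neg\neg\iota(A)$ and the type-tuple computation $() \to (0) = (0)$, which the paper handles implicitly via its $\sim$ remark and a direct assertion.
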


\begin{proof}
We have that $(\exists x A)_\Sh \sim \iota(A)$, $\uu{u}_{\exists x A}=()$, $\uu{x}_{\exists x A}=(x)$, so $\tp\left(\uu{u}_{\exists x A}\right) \to \tp\left(\uu{x}_{\exists x A}\right) = (0)$. Thus, by Theorem~\ref{sound}, there is an $\omega$-term $t$ of type $0$ with $FV(t)\subseteq FV(\exists x A) = \emptyset$ (so $t$ is closed), extractable from the proof, such that for every $\sigma$-structure $\mathcal{M}$ such that its $\sigma_\ar$-reduct is $\mathcal{N}$ and $\mathcal{M} \models \Gamma$ and for every higher-order valuation $e$, we have that
$$\left\|\iota(A) \left[x:=t\right] \right\|^{\mathcal{M},\omega}_e = 1,$$
which, since $\iota(A) \left[x:=t\right]$ is a sentence, may be expressed as
$$\mathcal{M} \models^\omega \iota(A)[x:=t],$$
which is what we needed to show.
\end{proof}

If we again follow Tait \cite{Tai65} and consider the term rewriting system on the $\omega$-terms generated by the reduction relations $(\lambda x.t)(s) \leadsto t[x:=s]$, $\left((t_n)_{n \in \N}r\right)s\leadsto \left((t_ns)_{n\in\N}\right)r$ and $(t_n)_{n \in \N}\uu{m} \leadsto t_m$, as extended by compatibility with the term constructors, we have that each term $t$ has a normal form $s$ having the same free variables, such that for every $\sigma$-structure $\mathcal{M}$ whose $\sigma_\ar$-reduct is $\mathcal{N}$, and every valuation $e$, $\|t=s\|^{\mathcal{M},\omega}_e=1$.

We shall now work towards proving the analogue of \cite[Lemma 10]{GerKoh05}.

\begin{lemma}\label{la1}
Let $a$ and $b$ be $\omega$-terms such that $ab$ is a closed term in normal form. Then either the term $a$ is a non-zero $(t_n)_{n \in \N}$ or there is an $n \in \N$ such that the term $ab$ is of type $0^n \to 0$.
\end{lemma}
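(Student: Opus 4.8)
The plan is to proceed by structural induction on the $\omega$-term $a$, analyzing which constructor produced it and exploiting heavily the two standing hypotheses: that $ab$ is \emph{closed} (so $a$ itself, being a subterm in the applicative position, has no free variables beyond those that $b$ could not supply, and in fact $a$ must be closed) and that $ab$ is in \emph{normal form} with respect to the rewriting system introduced just above. The key observation is that normality forbids certain shapes for $a$: since $(\lambda x.t)(s) \leadsto t[x:=s]$ is a redex, $a$ cannot be a $\lambda$-abstraction, for then $ab$ would reduce; and since $(t_n)_{n \in \N}\uu{m} \leadsto t_m$ is a redex, if $a$ is a zero $(t_n)_{n \in \N}$ (i.e.\ of type $0 \to \rho$ with $\rho = 0$) then $b$ would have to be of type $0$ but \emph{not} a numeral $\uu{m}$, otherwise we would again have a redex --- so I must track carefully whether the argument can be a numeral.

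First I would rule out the $\lambda$-abstraction case outright by the redex argument. Next I would handle the case where $a$ is a sequence term $(t_n)_{n \in \N}$: by the typing rule this has type $0 \to \rho$, so $b$ has type $0$; if $\rho \neq 0$ then $a$ is non-zero and we are immediately in the first disjunct of the conclusion, while if $\rho = 0$ then $a$ is zero and I would need normality together with closedness of $b$ (a closed normal term of type $0$ must be a numeral $\uu{m}$, by a subsidiary normal-form analysis) to derive that $ab = (t_n)_{n \in \N}\uu{m}$ is itself a redex, contradicting normality --- hence this subcase cannot arise. The remaining constructors for $a$ are variables, constants (function symbols or case-distinction constants), and applications $a = a' s$. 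A variable is excluded since $a$ is closed. For a constant of type $0^n \to 0$, the term $ab$ has type $0^{n-1} \to 0$, landing in the second disjunct. For an application $a = a' s$, I would recurse: $a'$ is again closed and $a' s$ is a subterm of the normal term $ab$, hence itself normal, so the inductive hypothesis applies to $a'$ and $s$, and a short type bookkeeping (peeling off one more argument) delivers the conclusion for $a$.

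The main obstacle I anticipate is the zero-sequence subcase and, more generally, the lemma that a \emph{closed} $\omega$-term of type $0$ in normal form must be a numeral. This is where the infinitary character of the calculus bites: one cannot simply induct on finite term structure in the naive way, because a sequence term $(t_n)_{n \in \N}$ has infinitely many immediate subterms. I would instead use the ordinal length function $|t|$ defined in the excerpt to run a transfinite induction, showing that a closed normal term of type $0$ cannot be a $\lambda$-abstraction (wrong type), cannot be a sequence term (a sequence term has type $0 \to \rho$, not $0$), cannot be a bare function-symbol or case-distinction constant (those have arrow type $0^n \to 0$ with $n \geq 1$, again not matching type $0$ unless fully applied), and hence must be built by application down to a head symbol $S$ or $0$; unwinding this, closedness and normality force it into the shape $\uu{m}$. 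Care is needed to confirm that the second rewrite rule $\left((t_n)_{n \in \N}r\right)s\leadsto \left((t_ns)_{n\in\N}\right)r$ does not open up an escaped redex in the application case, so that the claim ``subterm of a normal term is normal'' is genuinely available; this compatibility bookkeeping, rather than any deep idea, is the fiddly part of the argument.
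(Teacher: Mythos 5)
Your overall skeleton (induction over the term structure, excluding variables by closedness and $\lambda$-abstractions by the $\beta$-redex rule, reading off the type when the head is a constant, recursing through applications) parallels the paper's proof, which runs an induction on the length of $ab$. But your treatment of the zero-sequence case contains a genuine error that sinks the argument. You claim that this subcase ``cannot arise,'' deriving a contradiction from the subsidiary lemma that a closed normal $\omega$-term of type $0$ must be a numeral $\uu{m}$. That lemma is false in this calculus: the signature $\sigma$ contains symbols not in $\sigma_\ar$, and these carry no rewrite rules, so for instance $g0$ (for a non-arithmetic unary function symbol $g$), the arity-$0$ symbol $k$, any full application of a case-distinction constant $c_\vp$, and indeed $\left((t_n)_{n\in\N}\right)s$ for a non-numeral closed normal $s$ of type $0$ are all closed normal terms of type $0$. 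The paper's worked example produces exactly such terms ($a'=(u_n)_{n \in \N}k$, then $q'$ and $m'$), and the failure of your lemma is the entire point of the paper: were it true, every extracted term would normalize to a numeral, the sets $T^{\mathcal{M}}_s$ would be singletons, and no variable-length Herbrand disjunction could ever occur. The repair is easy and is what the paper does: when $a$ is a zero $(t_n)_{n \in \N}$ it has type $0\to 0=0^1\to 0$, so $ab$ has type $0=0^0\to 0$, and you land directly in the \emph{second} disjunct of the conclusion; no contradiction is needed, nor is one available.

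There is also a secondary gap in your application case $a=a's$: ``type bookkeeping'' alone does not deliver the conclusion, because the induction hypothesis applied to $a's$ may return its \emph{first} disjunct, namely that $a'$ is a non-zero $(t_n)_{n \in \N}$; in that situation $a$ has some arbitrary type $\rho$ and neither disjunct for $a$ follows. This case must instead be killed off using the permutation rule $\left((t_n)_{n \in \N}r\right)s\leadsto \left((t_ns)_{n\in\N}\right)r$: if $a'$ were a non-zero sequence term, then $ab=(a's)b$ would be a redex, contradicting normality of $ab$. You do cite this rule, but only in connection with ``subterms of normal terms are normal'' (which is immediate from compatibility of the rewrite relation with the constructors), not at the point in the induction where it is actually indispensable.
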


\begin{proof}
We prove the conclusion by induction on the length of $ab$. Since $ab$ is closed, $a$ cannot be a variable. If $a$ is a non-zero $(t_n)_{n \in \N}$, we are done. If $a$ is a constant or is a zero $(t_n)_{n \in \N}$, then there is a $k$ such that $a$ is of a type $0^k\to 0$, so $ab$ is of type $0^{k-1}\to0$. Since $ab$ is in normal form, $a$ cannot be a $\lambda$-expression. Therefore, $a$ is of the form $uv$. But then $uv$ is also a closed term in normal form, so, by the induction hypothesis, either $u$ is a non-zero $(t_n)_{n \in \N}$, contradicting the fact that $(uv)b$ is in normal form, or there is a $k$ such that $a$ is of a type $0^k\to 0$, so $ab$ is of type $0^{k-1}\to0$.
\end{proof}

\begin{lemma}\label{la2}
Let $s$ be a closed $\omega$-term of type $0$ in normal form. Then the term $s$ contains neither $\lambda$-expressions, nor non-zero $(t_n)_{n \in \N}$'s  -- thus, $s$ is built solely out of constants, zero $(t_n)_{n \in \N}$'s and applications.
\end{lemma}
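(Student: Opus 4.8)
The plan is to argue by structural induction on the length $|s|$ (which is legitimate by the remark, following the definition of the length function, that one may prove properties of $\omega$-terms by structural induction). Since $s$ is of type $0$, its outermost constructor is heavily constrained: it cannot be a variable, because $s$ is closed; and it cannot be a $\lambda$-expression or a term $(t_n)_{n \in \N}$, since both of these carry an arrow type rather than type $0$. Hence $s$ is either a constant — necessarily one of type $0^0 \to 0 = 0$, i.e. a $0$-ary function symbol, which by itself contains no $\lambda$-expression and no sequence, settling the base case — or an application $ab$. The whole content of the lemma thus lies in the case $s = ab$.

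So suppose $s = ab$. First I would record the bookkeeping facts that make the induction hypothesis applicable: since $FV(ab) = FV(a) \cup FV(b) = \emptyset$, both $a$ and $b$ are closed; since a redex occurring in $a$ or in $b$ would, by compatibility, produce a redex in $ab = s$, both $a$ and $b$ are in normal form; and $|a|, |b| < |ab| = |s|$. The goal is then to identify the head of $s$. To that end I would invoke Lemma~\ref{la1}: as $ab$ is a closed normal form, either $a$ is a non-zero $(t_n)_{n \in \N}$ or $ab$ is of type $0^n \to 0$. The first alternative is impossible, since a non-zero sequence has type $0 \to \rho$ with $\rho \neq 0$, forcing $ab$ to have the nonzero type $\rho$ and contradicting $\tp(ab) = 0$. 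Peeling the left spine by iterating this observation (exactly as in the proof of Lemma~\ref{la1}, where the case $a = uv$ is handed to the induction hypothesis) identifies the head $h$ of $s$: it is neither a variable (closedness) nor a $\lambda$-expression (else $h$ applied to its first argument would be a $\beta$-redex, against normality), and it is not a non-zero sequence; so $h$ is a constant or a zero sequence.

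It then remains to treat these two head shapes. If $h$ is a constant of type $0^m \to 0$, then, as all such constants consume only arguments of type $0$ and $s$ has type $0$, the term $s$ is $h$ applied to exactly $m$ arguments $b_1, \ldots, b_m$, each of type $0$; each $b_i$ is closed, in normal form, and of length strictly below $|s|$, so the induction hypothesis applies to it, and since $h$ itself contains neither a $\lambda$-expression nor a sequence, so does $s$. If $h$ is a zero sequence $(t_n)_{n \in \N}$, then the reduction rule $\left((t_n)_{n \in \N} r\right) s' \leadsto \left((t_n s')_{n \in \N}\right) r$ shows that $h$ can absorb at most one argument without creating a redex, so $s = (t_n)_{n \in \N}\, b_1$ with $b_1$ of type $0$; moreover $b_1$ cannot be a numeral $\uu{m}$, since $(t_n)_{n \in \N}\uu{m} \leadsto t_m$ would again break normality. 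Each $t_n$ is closed (as $(t_n)_{n \in \N}$ is closed), of type $0$, in normal form, and of length $|t_n| < |(t_n)_{n \in \N}| \leq |s|$, so the induction hypothesis applies to every $t_n$ and to $b_1$; as $h$ is a zero sequence — which the conclusion explicitly permits — the term $s$ again contains no $\lambda$-expression and no non-zero sequence.

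The main obstacle I anticipate is precisely the analysis of the head in the application case, and within it the exclusion of a non-zero sequence as head. This is where Lemma~\ref{la1}, the type discipline, and the reduction rule for a sequence applied to two arguments must be used in concert: a sequence sitting at the head of a normal form may legally absorb only a single argument, and a single argument is never enough to bring a non-zero sequence — whose result type $\rho$ is itself an arrow type — down to type $0$, so only a zero sequence can occur there. By contrast, the remaining ingredients — that the relevant subterms inherit closedness, normality, type $0$, and strictly smaller length, so that the induction hypothesis is genuinely available — are routine, as is the final reassembly into the statement that $s$ is built solely from constants, zero $(t_n)_{n \in \N}$'s, and applications.
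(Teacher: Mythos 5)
Your proof is correct, but it follows a genuinely different route from the paper's. The paper proceeds by contradiction: it selects a maximal offending subterm $r$ (a $\lambda$-expression or a non-zero $(t_n)_{n \in \N}$ contained in no other such, with a further minimality condition on application contexts) and rules out, one by one, every possible shape of the subterm lying immediately above $r$, using closedness, normality, the type discipline, and Lemma~\ref{la1}. You instead argue positively, by induction on length: you decompose $s$ into head and spine, show the head must be a constant or a zero sequence (a variable is barred by closedness, a $\lambda$-expression or a sequence receiving two or more arguments is barred by normality, and a non-zero sequence receiving at most one argument is barred by the type of $s$), and then feed the type-$0$ spine arguments and the entries $t_n$ of a zero-sequence head back into the induction hypothesis. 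In effect you prove the head-form characterization of Lemma~\ref{la3} directly and read off Lemma~\ref{la2} as a byproduct, inverting the paper's order of deduction (the paper derives Lemma~\ref{la3} from Lemma~\ref{la2} by a subsequent induction). What your route buys: Lemma~\ref{la3} comes essentially for free, the argument is a direct structural one rather than a minimal-counterexample argument, and your appeal to Lemma~\ref{la1} is in fact nearly dispensable, since the typing argument that excludes a non-zero head already does the needed work. What the paper's route buys: its analysis stays local to the offending subterm, so it avoids the spine bookkeeping your induction must carry (closedness, normality, and length bounds for every spine component, plus counting arguments against the head's type $0^m \to 0$), and it puts Lemma~\ref{la1} to essential use. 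One small remark: your observation that the argument of a zero-sequence head cannot be a numeral is correct but plays no role in the conclusion.
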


\begin{proof}
Assume that $s$ contains a $\lambda$-expression or a non-zero $(t_n)_{n \in \N}$ and let $r$ be such a subterm which is not contained in another such, and also such that if it is contained in a term $q$ such that $pq$ is a subterm of $s$, $p$ does not contain $\lambda$-expressions or non-zero $(t_n)_{n \in \N}$'s (this can be specified by structural recursion).

Suppose first that $r$ is a $\lambda$-expression, and denote it by $\lambda x.u$. We cannot have that $s=\lambda x.u$, since $s$ is of type $0$. Therefore, there is a subterm of $s$ which lies immediately above $\lambda x.u$, which cannot be, by our assumption, a non-zero $(t_n)_{n \in \N}$, neither can it be a zero $(t_n)_{n \in \N}$ (since then each $t_n$ is of type $0$, while $\lambda x.u$ is not), and also neither can it be of the form $(\lambda x.u)v$ (since that would contradict that $s$ is in normal form). Therefore, that subterm is of the form $v(\lambda x.u)$. Now, if $v$ were a variable, and since $s$ is closed, then $v(\lambda x.u)$ would have to lie inside another $\lambda$-expression, contradicting our assumption. Since $\lambda x.u$ is not of type $0$, $v$ cannot be a constant or a term of the form $(t_n)_{n \in \N}$, and also it cannot be a $\lambda$-expression since that would contradict that $s$ is in normal form. Therefore, $v$ is of the form $ab$. Since $(ab)(\lambda x.u)$ is not contained in another $\lambda$-expression and $s$ is closed, we have that $ab$ is closed. Also, $ab$ is in normal form. Therefore, by Lemma~\ref{la1}, either the term $a$ is a non-zero $(t_n)_{n \in \N}$, which contradicts the fact that $(ab)(\lambda x.u)$ is in normal form (or already contradicts our assumption), or there is an $n$ such that $ab$ is of type $0^n \to 0$, which is impossible, since $\lambda x.u$ is not of type $0$.

Suppose now that $r$ is a non-zero $(t_n)_{n \in \N}$. We cannot have that $s=(t_n)_{n \in \N}$, since $s$ is of type $0$. Therefore, there is a subterm of $s$ which lies immediately above $(t_n)_{n \in \N}$, which, by our assumption, is either of the form $u(t_n)_{n \in \N}$ or $(t_n)_{n \in \N}u$. In the first case, again by our assumption that $u$ cannot contain $\lambda$-expressions or non-zero infinite terms, and also it cannot contain variables since then $u(t_n)_{n \in \N}$ would have to be contained in a $\lambda$-expression, contradicting our assumption, there is an $n \in \N$ such that $u$ is of the type $0^n\to0$, since this is the only kind of type one can build out of constants, zero $(t_n)_{n \in \N}$'s and applications, and this would imply that $(t_n)_{n \in \N}$ is of type $0$, which is a contradiction. Thus, the subterm must be of the form $(t_n)_{n \in \N}u$, which is not of type $0$, since $(t_n)_{n \in \N}$ is non-zero, so it cannot be the whole of $s$. We can therefore repeat the argument before to derive that this subterm must be in its turn contained in a larger one $\left((t_n)_{n \in \N}u\right)v$, which is not in normal form, and this contradiction finishes our proof.
\end{proof}

\begin{lemma}\label{la3}
Let $s$ be a closed $\omega$-term of type $0$ in normal form and let $u$ be a subterm of $s$. Then there is a $k \in \N$ and terms $s_1,\ldots,s_k$ of type $0$ such that there is a term $f$ which is either a constant or a zero $(t_n)_{n \in \N}$ with $u=fs_1\ldots s_k$ (and thus $s_1,\ldots,s_k$ are also in normal form).
\end{lemma}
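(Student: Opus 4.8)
The plan is to combine Lemma~\ref{la2} with an induction that carries along type information. First I would observe that since $s$ is a closed $\omega$-term of type $0$ in normal form, Lemma~\ref{la2} applies, and moreover any subterm $u$ of $s$ inherits the same structure: it contains no $\lambda$-expressions and no non-zero $(t_n)_{n \in \N}$'s, so $u$ is built solely out of constants, zero $(t_n)_{n \in \N}$'s and applications. The other ingredient I would isolate at the outset is the type fact that every constant and every zero $(t_n)_{n \in \N}$ is of type $0^n \to 0$ for some $n \in \N$: function-symbol constants and case distinction constants are of such a type by construction, and a zero $(t_n)_{n \in \N}$ is of type $0 \to 0$, which is again of this form.

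Second, I would prove by induction on the length of $u$ the slightly strengthened statement that, in addition to the decomposition $u = f s_1 \ldots s_k$ asserted in the lemma, the term $u$ is itself of type $0^m \to 0$ for some $m \in \N$. This strengthening is what makes the induction close. In the base case, where $u$ is a constant or a zero $(t_n)_{n \in \N}$, I take $k = 0$ and $f = u$, and the type fact above supplies the type $0^m \to 0$. The remaining case is $u = u_1 u_2$, where $u_1$ is a shorter subterm of $s$.

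Third --- and this is the crux --- I would apply the induction hypothesis to $u_1$, obtaining both a decomposition $u_1 = f s_1 \ldots s_{k'}$ and the information that $u_1$ is of type $0^m \to 0$. Since $u = u_1 u_2$ is well-formed, $u_1$ must have a function type, so $m \geq 1$ and hence $u_1$ is of type $0 \to (0^{m-1} \to 0)$; this forces the argument $u_2$ to be of type $0$ and gives $u$ the type $0^{m-1} \to 0$. Setting $k := k' + 1$ and $s_k := u_2$ then yields $u = f s_1 \ldots s_{k'} u_2 = f s_1 \ldots s_k$ with all $s_i$ of type $0$, completing the induction. The parenthetical claim that the $s_i$ are in normal form is immediate, since every subterm of the normal form $s$ is itself in normal form.

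The main obstacle I anticipate is precisely the bookkeeping in the third step: one has to be sure that partial applications of a constant or zero sequence never expose an argument slot of higher type, and the only reason this holds is that all the base building blocks live in the ``first-order'' function types $0^n \to 0$, a property that is preserved under application to type-$0$ arguments. Carrying the invariant that $u$ is of type $0^m \to 0$ through the induction is what captures this, and once it is in place the rest is routine.
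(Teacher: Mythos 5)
Your proposal is correct and follows essentially the same route as the paper's proof: induction on the length of $u$ using Lemma~\ref{la2}, with the base case $k:=0$, $f:=u$ for constants and zero $(t_n)_{n \in \N}$'s, and the application case handled by applying the induction hypothesis to the function part and appending the argument as $s_k$. The only difference is that you explicitly carry the invariant that $u$ has type $0^m \to 0$ through the induction, which the paper leaves implicit in its assertion ``Then $w$ is of type $0$'' --- a worthwhile clarification, but the same argument.
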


\begin{proof}
We prove the conclusion by induction on the length of $u$, using what we learned in Lemma~\ref{la2}. If $u$ is a constant or is a zero $(t_n)_{n \in \N}$, the conclusion holds (we take $k:=0$ and $f:=u$). If $u$ is of the form $vw$, then, by the induction hypothesis, there is an $l \in \N$ and terms $s_1,\ldots,s_l$ of type $0$ such that there is a term $f$ which is either a constant or a zero $(t_n)_{n \in \N}$ with $v=fs_1\ldots s_l$. Then $w$ is of type $0$, so we may take $k:=l+1$ and $s_k:=w$.
\end{proof}

Based on the characterization in Lemma~\ref{la3}, we shall now define, for each $\sigma$-structure $\mathcal{M}$ whose $\sigma_\ar$-reduct is $\mathcal{N}$, and for each closed $\omega$-term $s$ of type $0$ in normal form, a finite set of closed $\sigma$-terms denoted by $T^\mathcal{M}_s$, recursively in the length of $s$:
\begin{itemize}
\item if $f$ is a function symbol of $\sigma$ of arity $n$ and $s$ is of the form $fs_1\ldots s_n$, we put
$$T^\mathcal{M}_s:= \{f(t_1,\ldots,t_n) \mid \text{for all $i$, }t_i \in T^\mathcal{M}_{s_i} \};$$
\item if $\vp$ is a quantifier-free $\sigma$-formula $\vp$ which contains symbols not in $\sigma_\ar$ with $m$ free variables and $s$ is of the form $c_\vp s_1\ldots s_m s_{m+1}s_{m+2}$, we put
$$T^\mathcal{M}_s:= T^\mathcal{M}_{s_{m+1}} \cup T^\mathcal{M}_{s_{m+2}};$$
\item if for each $n \in \N$, $t_n$ is a term of type $0$ and $s$ is of the form $\left((t_n)_{n \in \N}\right)s_1$, we put
$$T^\mathcal{M}_s:= \bigcup_{r \in T^\mathcal{M}_{s_1}} T^\mathcal{M}_{t_{r^\mathcal{M}}}.$$
\end{itemize}

\begin{lemma}\label{l3}
Let $s$ be a closed $\omega$-term of type $0$ in normal form and let $\mathcal{M}$ be a $\sigma$-structure whose $\sigma_\ar$-reduct is $\mathcal{N}$. Then there is a finite set $S$ of closed first-order $\sigma$-terms such that
$$\mathcal{M} \models^\omega \bigvee_{r \in S} s = \iota(r).$$
\end{lemma}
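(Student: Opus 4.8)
The plan is to take $S := T^\mathcal{M}_s$, the finite set of closed $\sigma$-terms just defined, and to prove the assertion $\mathcal{M} \models^\omega \bigvee_{r \in T^\mathcal{M}_s} s = \iota(r)$ by transfinite induction on the ordinal length $|s|$. First I would record two structural facts about $s$. By Lemma~\ref{la2}, $s$ contains no $\lambda$-expressions and no non-zero $(t_n)_{n \in \N}$'s, so, since $s$ is moreover closed, every subterm of $s$ is again closed and in normal form (with no binders present, the free variables of a subterm are among those of the whole term, and a redex in a subterm would be a redex in $s$). By Lemma~\ref{la3} applied to $u := s$, the term $s$ is of the form $f s_1 \ldots s_k$ with $f$ a constant or a zero $(t_n)_{n \in \N}$ and each $s_i$ of type $0$; comparing types then forces $s$ into exactly one of the three shapes occurring in the recursive definition of $T^\mathcal{M}_s$. (That $T^\mathcal{M}_s$ is finite is immediate from that definition, the only union, in the zero case, being indexed by the finite set $T^\mathcal{M}_{s_1}$.)

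If $f$ is a (non-primitive-recursive) function symbol of arity $n$, then $k = n$, and I apply the induction hypothesis to each $s_i$, which is shorter, closed and normal, to obtain $t_i \in T^\mathcal{M}_{s_i}$ with $s_i{}^{\mathcal{M},\omega}_e = (\iota(t_i))^{\mathcal{M},\omega}_e = t_i^\mathcal{M}$; the intended semantics of the $\omega$-constant $f$ then gives $s^{\mathcal{M},\omega}_e = f^\mathcal{M}(t_1^\mathcal{M}, \ldots, t_n^\mathcal{M}) = (\iota(f(t_1,\ldots,t_n)))^{\mathcal{M},\omega}_e$, and $f(t_1,\ldots,t_n) \in T^\mathcal{M}_s$ by definition. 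If $f$ is a case-distinction constant $c_\vp$, then $k = m+2$, and its semantics makes $s^{\mathcal{M},\omega}_e$ equal to either $s_{m+1}{}^{\mathcal{M},\omega}_e$ or $s_{m+2}{}^{\mathcal{M},\omega}_e$; applying the induction hypothesis to the relevant one of $s_{m+1}, s_{m+2}$ then produces a witness lying in $T^\mathcal{M}_{s_{m+1}} \cup T^\mathcal{M}_{s_{m+2}} = T^\mathcal{M}_s$.

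The remaining case, where $f$ is a zero $(t_n)_{n \in \N}$ (so $k = 1$ and $s = ((t_n)_{n \in \N}) s_1$), is where I expect the genuine difficulty to lie. Here I would first apply the induction hypothesis to $s_1$ to find $r \in T^\mathcal{M}_{s_1}$ with $s_1{}^{\mathcal{M},\omega}_e = r^\mathcal{M} \in \N$, so that, by the semantics of $(t_n)_{n \in \N}$, one gets $s^{\mathcal{M},\omega}_e = (t_{r^\mathcal{M}})^{\mathcal{M},\omega}_e$; then I would invoke the induction hypothesis a second time on the single term $t_{r^\mathcal{M}}$, again closed and in normal form, to obtain a witness $r' \in T^\mathcal{M}_{t_{r^\mathcal{M}}} \subseteq T^\mathcal{M}_s$, and conclude $s^{\mathcal{M},\omega}_e = (\iota(r'))^{\mathcal{M},\omega}_e$. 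The delicate point is the legitimacy of this second appeal: one must verify $|t_{r^\mathcal{M}}| < |s|$, which holds since $|t_{r^\mathcal{M}}| + 1 \leq \sup_{n}(|t_n|+1) = |(t_n)_{n \in \N}| < |s|$, so the transfinite induction stays well-founded even though the index $r^\mathcal{M}$ is singled out only semantically, after $s_1$ is evaluated in $\mathcal{M}$. It is precisely this dependence of the chosen branch on the structure that explains why $T^\mathcal{M}_s$, and hence $S$, is permitted to depend on $\mathcal{M}$.
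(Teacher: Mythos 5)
Your proposal is correct and takes exactly the paper's approach: the paper's own proof consists of the single line ``We just take $S:=T^\mathcal{M}_s$,'' with the verification left implicit in the recursive definition of $T^\mathcal{M}_s$ and the preceding Lemmas~\ref{la2} and~\ref{la3}. Your transfinite induction on $|s|$ (including the check that $|t_{r^\mathcal{M}}|<|s|$, which is what makes the recursive definition and the induction legitimate in the infinite-term case) is precisely the argument the paper omits.
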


\begin{proof}
We just take $S:=T^\mathcal{M}_s$.
\end{proof}

We can now express our main result.

\begin{theorem}\label{main}
Let $A$ be a quantifier-free $\sigma$-formula such that it has at most one free variable denoted by $x$ and such that the quantification in $\exists x A$ is not bounded. Assume that
$$\mathsf{PA^\sigma} + \Gamma \vdash \exists x A.$$
Let $\mathcal{M}$ be a $\sigma$-structure whose $\sigma_\ar$-reduct is $\mathcal{N}$ and $\mathcal{M} \models \Gamma$. Then Shoenfield's extraction algorithm can be used to explicitly exhibit from the given proof a finite set $S$ of closed first-order $\sigma$-terms such that
$$\mathcal{M} \models \bigvee_{r \in S} A[x:=r].$$
\end{theorem}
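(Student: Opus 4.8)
The plan is to chain together Lemmas~\ref{l1} and~\ref{l3} with the normalization fact recorded just before Lemma~\ref{la1}. First I would apply Lemma~\ref{l1} to the hypothesis $\mathsf{PA^\sigma} + \Gamma \vdash \exists x A$, obtaining a closed $\omega$-term $t$ of type $0$, extractable from the proof, such that $\mathcal{M} \models^\omega \iota(A)[x:=t]$ for the structure $\mathcal{M}$ at hand. I would then pass to the normal form $s$ of $t$: by the normalization statement preceding Lemma~\ref{la1}, the term $s$ is again closed and of type $0$, it is in normal form, and $\|t = s\|^{\mathcal{M},\omega}_e = 1$ for every valuation $e$. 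Since the type-$0$ equality in the $\omega$-semantics is just the genuine equality of $\N$ (the $\sigma_\ar$-reduct of $\mathcal{M}$ being $\mathcal{N}$), replacing $t$ by $s$ inside $\iota(A)$ preserves truth, so $\mathcal{M} \models^\omega \iota(A)[x:=s]$.

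Now $s$ is exactly of the shape to which Lemma~\ref{l3} applies, yielding a finite set $S$ of closed first-order $\sigma$-terms with $\mathcal{M} \models^\omega \bigvee_{r \in S} s = \iota(r)$. Fixing some $r \in S$ witnessing this disjunction, we have $\mathcal{M} \models^\omega s = \iota(r)$, and invoking the congruence of type-$0$ equality once more gives $\mathcal{M} \models^\omega \iota(A)[x := \iota(r)]$. Finally I would translate this back into the first-order world: since substitution commutes with the embedding $\iota$ and $r$ is closed, the formula $\iota(A)[x := \iota(r)]$ coincides with $\iota(A[x:=r])$, and since $A$ has at most the free variable $x$, the formula $A[x:=r]$ is a $\sigma$-sentence. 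By the translation fact recorded just after the definition of the $\omega$-semantics of formulas, namely that $\mathcal{M} \models \vp$ iff $\mathcal{M} \models^\omega \iota(\vp)$ for any sentence $\vp$, this gives $\mathcal{M} \models A[x:=r]$ and hence $\mathcal{M} \models \bigvee_{r \in S} A[x:=r]$, as required.

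The conceptual work is entirely carried by the two lemmas, so this proof is essentially an assembly step. The only points that demand care are the two appeals to equality congruence in the $\omega$-semantics and the verification that $\iota$ commutes with the substitution $x := r$; these are routine, but they are precisely the hinges that reconnect the infinitary calculus to the original first-order language, so I expect the bookkeeping around them, rather than any genuinely difficult argument, to be the main thing to pin down.
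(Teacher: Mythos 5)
Your proposal is correct and follows exactly the paper's own route: Lemma~\ref{l1} to extract the closed $\omega$-term, normalization with $\mathcal{M} \models^\omega t=s$ to pass to the normal form, Lemma~\ref{l3} to obtain the finite set $S$, and the embedding $\iota$ to translate back to the first-order setting. The only cosmetic difference is that you fix a single witnessing disjunct $r \in S$ and reassemble, whereas the paper pushes the whole disjunction through $\iota$ at once; the content is identical.
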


\begin{proof}
By Lemma~\ref{l1}, there is a closed $\omega$-term of type $0$ such that
$$\mathcal{M} \models^\omega \iota(A)[x:=t].$$
Let $s$ be the closed $\omega$-term which is the normal form of $t$. Since $\mathcal{M} \models^\omega t=s$, we have that
$$\mathcal{M} \models^\omega \iota(A)[x:=s].$$
By Lemma~\ref{l3}, we get that there is a finite set $S$ of closed first-order $\sigma$-terms such that
$$\mathcal{M} \models^\omega \bigvee_{r \in S} s = \iota(r),$$
so
$$\mathcal{M} \models^\omega \bigvee_{r \in S} \iota(A)[x:=\iota(r)],$$
which can be written as
$$\mathcal{M} \models^\omega \iota\left(\bigvee_{r \in S} A[x:=r]\right),$$
from which we get that
$$\mathcal{M} \models \bigvee_{r \in S} A[x:=r],$$
i.e. what we needed to show.
\end{proof}

Of course, the plain existence of $S$ in the statement of Theorem~\ref{main}, as that in Lemma~\ref{l3} before it, is completely trivial as it stands, since we can just take an appropriate witness $n \in \N$ in $\mathcal{M}$ and take $S:=\{\uu{n}\}$. The real content of our result lies in the way $S$ is constructed, since one does not necessarily ``call'' the whole of $\mathcal{M}$ in the course of the recursive definition, so one may hope to recover some amount of uniformity in $\mathcal{M}$, i.e. one might not care about the interpretations of some of the symbols {\bf not} in $\sigma_\ar$; we briefly note that if the proof takes place entirely within $\sigma_\ar$, then the case distinction constants never show up and the resulting set is necessarily a singleton, a situation which is not of interest to us here. All of this could be made a bit more precise, but we prefer to give an example to illustrate these facts to which we also alluded in the Introduction, and this we do in the next section.

\section{A concrete example}

We shall now show how our results can elucidate the example given as motivation in the Introduction. In the first step, we shall spell out the non-quantitative proof in detail in order to help with the formalization and the extraction.

Let, therefore, $k \in \N$, $g:\N \to \N$ and $(a_n)_{n \in \N} \subseteq [0,1]$ be nonincreasing. We want to show that there is an $N \in \N$ such that for all $i$, $j \in [N,\wt{g}(N)]$,
$$|a_i-a_j| \leq \frac1{k+1}.$$
It is enough to show that there is an $N$ such that
$$a_N - a_{\wt{g}(N)} \leq \frac1{k+1},$$
since, then, taking the same $N$ and $i$, $j \in [N,\wt{g}(N)]$ and assuming w.l.o.g. that $i\leq j$, we have that
$$|a_i-a_j| = a_i - a_j \leq a_N - a_{\wt{g}(N)} \leq \frac1{k+1}.$$

We shall follow now the argument from \cite[Lemma 6.3]{KohLeu10}. Assume towards a contradiction that, for all $N \in \N$,
$$a_N - a_{\wt{g}(N)} > \frac1{k+1}.$$
We now show that for all $x \in \N$ there is an $y \in \N$ such that
$$a_0 - a_y > \frac{x+1}{k+1}.$$
We prove this by induction on $x$. For $x=0$, we have that
$$a_0 - a_{\wt{g}(0)} > \frac1{k+1},$$
so we can take $y:=\wt{g}(0)$.
Let now $x \in \N$ and assume that there is a $y$ such that
$$a_0 - a_y > \frac{x+1}{k+1}.$$
We want to show that there is a $w$ such that
$$a_0 - a_w > \frac{x+2}{k+1}.$$
Since 
$$a_y - a_{\wt{g}(y)} > \frac1{k+1},$$
we have that
$$a_0 - a_{\wt{g}(y)} = a_0 - a_y + a_y - a_{\wt{g}(y)} > \frac{x+1}{k+1} + \frac1{k+1} = \frac{x+2}{k+1},$$
so we can take $w:=\wt{g}(y)$ and the induction is finished.
Now, if we take $x:=k$, we have that there is an $y \in \N$ such that
$$a_0 - a_y > \frac{k+1}{k+1} =1,$$
but $a_0 - a_y \leq a_0 \leq 1$, which yields a contradiction, and thus we are finished with the proof.

To formalize the above argument in our framework, we shall construct the first-order signature $\sigma$ by adjoining to $\sigma_\ar$ the function symbols $k$ and $g$ of arity (obviously) $0$ and $1$, respectively, together with a relation symbol $P$ of arity $3$ such that $P(v,w,l)$ signifies
$$a_v - a_w \leq \frac{l}{k+1}.$$
It is then clear that if we take
$$\Gamma:= \{\forall v\forall p\forall r\forall b((\neg P(v,p,b) \land \neg P(p,r,S0)) \to \neg P(v,r,Sb)), \forall t P(0,t,Sk)\},$$
which is a set of universal $\sigma$-sentences, then the whole of the above argument is formalizable in $\mathsf{I\Sigma_1^\sigma} + \Gamma$.

We shall now present the result after it has passed through the interpretation, being a bit sloppy with the distinction between terms and their meaning in a model, a distinction that we shall reinstate shortly afterwards.

Assume one sets, for arbitrary $k \in \N$ and $g:\N \to \N$,
$$B := \neg P(0,R_{u,f}z,Sz) \land P(0,R_{u,f}(Sz),SSz), \quad a := a^3_B(\wt{g}0)(\lambda v.\lambda w.\wt{g}w)k,$$
$$q := R_{\wt{g}0,\lambda v.\lambda w.\wt{g}w}a, \quad N := c_{P(0,\wt{g}0,S0)}0q.$$
Then this $N$ is the extracted witness, in the sense that $\Gamma$ implies $P(N,gN,S0)$.

Let us now verify this. Assume towards a contradiction that $\neg P(N,\wt{g}N,S0)$. Assuming in addition $P(0,\wt{g}0,S0)$, then $N=0$, therefore $\neg P(N,\wt{g}N,S0)$, a contradiction. Therefore $\neg P(0,\wt{g}0,S0)$, from which we get $N=q$ and thus $\neg P(q,\wt{g}q,S0)$.

We now show that for all $x \leq k$, $\neg P(0,R_{\wt{g}0,\lambda v.\lambda w.\wt{g}w}x,Sx)$, by induction on $x$.

For $x=0$, we have that $R_{\wt{g}0,\lambda v.\lambda w.\wt{g}w}x=\wt{g}0$, so we have to show that $\neg P(0,\wt{g}0,S0)$, which we know.

We now have to show that for all $x<k$, $\neg P(0,R_{\wt{g}0,\lambda v.\lambda w.\wt{g}y}x,Sx)$ implies $\neg P(0,R_{\wt{g}0,\lambda v.\lambda w.\wt{g}y}(Sx),SSx)$. Assume this wasn't so and take $x$ minimal with this property. Then $x=a$ (by the definition of $a$), and thus $R_{\wt{g}0,\lambda v.\lambda w.\wt{g}w}x=q$ and
$$R_{\wt{g}0,\lambda v.\lambda w.\wt{g}w}(Sx) = (\lambda v.\lambda w.\wt{g}w)a(R_{g0,\lambda v.\lambda w.\wt{g}w}a) = (\lambda v.\lambda w.\wt{g}w)aq= \wt{g}q.$$
We thus have $\neg P(0,q,Sa)$ and $P(0,\wt{g}q,SSa)$. But, since $\neg P(q,\wt{g}q,S0)$, by the first axiom in $\Gamma$ we get that $\neg P(0,\wt{g}q,SSa)$, which yields a contradiction that finishes the induction.

By setting $x:=k$, we get that $\neg P(0,R_{\wt{g}0,\lambda v.\lambda w.\wt{g}w}k,Sk)$. But, by the second axiom in $\Gamma$, we have that $P(0,R_{\wt{g}0,\lambda v.\lambda w.\wt{g}w}k,Sk)$, which again yields a contradiction, and we are done with the verification.

Let us now consider formally the terms set above. We seek to compute a normal form for $N$, warning the reader that some (hopefully trivial) preparatory normalizing has already been done in the expression of $N$. We first remark that $r':=\left(\wt{g}^{(n+1)}0\right)_{n \in \N}$ is a normal form for $R_{\wt{g}0,\lambda v.\lambda w.\wt{g}w}$. Set
$$\vp:= \neg P(a,b,c) \land P(d,e,f).$$

Thus, if we put $u_0 := 0$ and for all $n \in \N$,
$$u_{n+1}:=c_\vp0\left(r'\uu{n}\right)(\uu{n+1})0\left(r'\uu{n+1}\right)(\uu{n+2})\uu{n} u_n,$$
then $a':=(u_n)_{n \in \N}k$ is a normal form for $a$. We also get that $q':=\left(\wt{g}^{(n+1)}0\right)_{n \in \N}a'$ is a normal form for $q$ and $N':=c_{P(0,\wt{g}0, S0)}0q'$ is a normal form for $N$.

Now take $\mathcal{M}$ to be a $\sigma$-structure whose $\sigma_\ar$-reduct is $\mathcal{N}$. We see that
$$T^\mathcal{M}_{u_0} = T^\mathcal{M}_0 =\{0\}$$
and, for all $n \in \N$,
$$T^\mathcal{M}_{u_{n+1}} = T^\mathcal{M}_{c_\vp0\left(r'\uu{n}\right)(\uu{n+1})0\left(r'\uu{n+1}\right)(\uu{n+2})\uu{n} u_n} = T^\mathcal{M}_{\uu{n}} \cup T^\mathcal{M}_{u_n} = \{\uu{n}\} \cup T^\mathcal{M}_{u_n},$$
so, for all $n \in \N$,
$$T^\mathcal{M}_{u_n} = \{\uu{i} \mid 0 \leq i \leq \max(n-1,0)\}.$$
We may now compute:
\begin{align*}
T^\mathcal{M}_{a'}&= \bigcup_{r \in T^\mathcal{M}_k} T^\mathcal{M}_{u_{r^\mathcal{M}}} = \bigcup_{r \in \{k\}} T^\mathcal{M}_{u_{r^\mathcal{M}}} = T^\mathcal{M}_{u_{k^\mathcal{M}}} = \left\{\uu{i} \mid 0 \leq i \leq \max\left(k^\mathcal{M}-1,0\right)\right\}.\\
T^\mathcal{M}_{q'}&= \bigcup_{r \in T^\mathcal{M}_{a'}} T^\mathcal{M}_{\wt{g}^{\left(r^\mathcal{M} + 1\right)}0} = \bigcup_{r \in T^\mathcal{M}_{a'}} \left\{\wt{g}^{\left(r^\mathcal{M} + 1\right)}0\right\} = \left\{ \wt{g}^{\left(r^\mathcal{M} + 1\right)}0 \mid r \in T^\mathcal{M}_{a'} \right\} \\
&= \left\{ \wt{g}^{(i+1)}0 \mid0 \leq i \leq \max\left(k^\mathcal{M}-1,0\right) \right\} = \left\{ \wt{g}^{(i)}0 \mid1 \leq i \leq \max\left(k^\mathcal{M},1\right) \right\}.\\
T^\mathcal{M}_{N'}&= T^\mathcal{M}_0 \cup T^\mathcal{M}_{q'} = \{0\} \cup \left\{ \wt{g}^{(i)}0 \mid1 \leq i \leq \max\left(k^\mathcal{M},1\right) \right\} = \left\{ \wt{g}^{(i)}0 \mid0 \leq i \leq \max\left(k^\mathcal{M},1\right) \right\}.
\end{align*}
Therefore, we have recovered a Herbrand disjunction of the form exhibited in the Introduction (and also in \cite[Lemma 6.3]{KohLeu10}), and we can clearly see that the computed set depends on $k^\mathcal{M}$ but not on $g^\mathcal{M}$ or $P^\mathcal{M}$, and thus we obtain uniformity in each class of $\sigma$-structures whose $\sigma_\ar$-reduct is $\mathcal{N}$, which satisfy $\Gamma$, and which interpret $k$ the same way.

\section{Acknowledgements}

I would like to thank Ulrich Kohlenbach for originally suggesting to me to look at the infinitary calculus which was introduced by Tait in \cite{Tai65}, and for the continuing discussions I had with him on the topic. I would also like to thank Pedro Pinto for his suggestions.

This work has been supported by a grant of the Romanian Ministry of Research, Innovation and Digitization, CNCS/CCCDI -- UEFISCDI, project number PN-III-P1-1.1-PD-2019-0396, within PNCDI III.


\begin{thebibliography}{99}

\bibitem{AfsHetLei20}
B. Afshari, S. Hetzl, G. E. Leigh,
Herbrand's theorem as higher order recursion.
{\it Ann. Pure Appl. Logic} 171, no. 6, 102792 [45 pp.],
2020.

\bibitem{Fer20a}
F. Ferreira,
The FAN principle and weak K\"onig's lemma in herbrandized second-order arithmetic.
{\it Ann. Pure Appl. Logic} 171, no. 9, 102843 [21 pp.],
2020.

\bibitem{Fer20b}
F. Ferreira,
Weak K\"onig's lemma in herbrandized classical second-order arithmetic.
{\it Port. Math.} 77, no. 3-4, 399--408,
2020.

\bibitem{FerFer17}
F. Ferreira, G. Ferreira,
A herbrandized functional interpretation of classical first-order logic.
{\it Arch. Math. Logic} 56, no. 5-6, 523--539,
2017.

\bibitem{GerKoh05}
P. Gerhardy, U. Kohlenbach,
Extracting Herbrand disjunctions by functional interpretation.
{\it Arch. Math. Logic} 44, no. 5, 633--644,
2005.

\bibitem{God58}
K. G\"odel,
\"Uber eine bisher noch nicht ben\"utze Erweiterung des finiten Standpunktes.
{\it Dialectica} 12, 280--287,
1958.

\bibitem{Koh08g}
U. Kohlenbach,
Herbrand's theorem and extractive proof theory.
{\it SMF -- Gazette des Math\'ematiciens} 118, 29--41, 2008. 

\bibitem{Koh08}
U. Kohlenbach,
{\it Applied proof theory: Proof interpretations and their use in mathematics}.
Springer Monographs in Mathematics, Springer,
2008.

\bibitem{Koh19}
U. Kohlenbach,
Proof-theoretic methods in nonlinear analysis.
In: B. Sirakov, P. Ney de Souza, M. Viana (eds.), {\it Proceedings of the International Congress of Mathematicians 2018
(ICM 2018)}, Vol. 2 (pp. 61--82). 
World Scientific, 2019.

\bibitem{KohLeu10}
U. Kohlenbach, L. Leu\c stean,
Asymptotically nonexpansive mappings in uniformly convex hyperbolic spaces.
{\it Journal of the European Mathematical Society} 12, 71--92,
2010.

\bibitem{Kre51}
G. Kreisel,
On the interpretation of non-finitist proofs, part I.
{\it J. Symbolic Logic} 16, 241--267,
1951.

\bibitem{Kre52}
G. Kreisel,
On the interpretation of non-finitist proofs, part II: Interpretation of number theory, applications.
{\it J. Symbolic Logic} 17, 43--58,
1952.

\bibitem{Par72} 
C. Parsons, On $n$-quantifier induction.
{\it J. Symbolic Logic} 37, 466--482, 1972.

\bibitem{Sho67}
J. Shoenfield,
{\it Mathematical Logic}.
Addison-Wesley Publishing Co., 1967.

\bibitem{StrKoh08}
T. Streicher, U. Kohlenbach,
Shoenfield is G\"odel after Krivine.
{\it Math. Logic Quaterly} 53, no. 2, 176--179,
2007.

\bibitem{Tai65}
W. W. Tait,
Infinitely long terms of transfinite type.
In: J. N. Crossley, M. A. E. Dummett (eds.), {\it Formal Systems and Recursive Functions} (pp. 176--185),
Studies in Logic and the Foundations of Mathematics,
Volume 40, Elsevier, Amsterdam, 1965.

\bibitem{Tao08} 
T. Tao, Soft analysis, hard analysis, and the finite 
convergence principle. Essay posted May 23, 2007. Appeared in: 
T. Tao, {\it Structure and Randomness: Pages from Year One of a Mathematical 
Blog}. AMS, 298 pp., 2008.

\bibitem{Tao08A} 
T. Tao,
Norm convergence of multiple ergodic averages for commuting transformations.
{\it Ergodic Theory Dynam. Systems} 28, 657--688,
2008.

\end{thebibliography}
\end{document}